\newtheorem{theorem}{Theorem}[section]
\newtheorem{definition}{Definition}[section]
\newtheorem{note}{Note}[section]
\newtheorem{notation}{Notation}[section]
\begin{document}
	
	\begin{center}
		\Large{\bf  Herscovici Conjecture on Pebbling }
	\end{center}    
	\begin{center}
		 I. Dhivviyanandam$^{1}$, A. Lourdusamy$^{2}$  S. Kither Iammal$^{3}$, K.Christy Rani$^4$\\
		  $^1$ Department of Mathematics, North Bengal St.Xavier's College, Rajganj, West Bengal, India.\\
		  e-mail: divyanasj@gmail.com; https://orcid.org/0000-0002-3805-6638.\\
		 
		  $^2$  Department of Mathematics, St.Xavier's 
		 College (Autonomous), Palayamkottai-627002, Tamilnadu, India.\\
		  e-mail: lourdusamy15@gmail.com; https://orcid.org/0000-0001-5961-358X.\\
		 $^3$ Department of Mathematics, Jayaraj Annapackiam College for Women (Autonomous), Periyakulam, Tamilnadu,  India.\\
		  e-mail: cathsat86@gmail.com; https://orcid.org/0000-0003-3553-0848.\\
		  $^4$ Department of Mathematics, Bon Secours College for Women (Autonomous), Thanjavur-613006 Tamilnadu,  India.\\
		 e-mail: christy.agnes@gmail.com; https://orcid.org/0000-0002-2837-9388.\\
	
	\end{center}        
	\begin{abstract}
		
			Consider a configuration of pebbles on the vertices of a connected graph. A pebbling move is to remove two pebbles from a vertex and to place one pebble at the neighbouring vertex of the vertex from which the pebbles are removed.
		
		For a positive integer $t$,	with every configuration of $\pi_t(G)$(least positive integer) pebbles, if we can transfer $t$ pebbles to  any target through a number of pebbling moves then $\pi_t(G)$ is called the $t$-pebbling number of $G$. 
		 
		We discuss the computation of the $t$-pebbling number, the $2t-$ pebbling property  and Herscovici conjecture  considering  total graphs.\\
		
		\bigskip \noindent Keywords: pebbling moves, $t$- pebbling number, $2t$-pebbling property, Herscovici conjecture, total graphs.
		
		\bigskip \noindent 2020 AMS Subject Classification: : 05C12, 05C25, 05C38, 05C76. 
		
	\end{abstract}
\section{Introduction}
Pebbling was introduced by Lagarias and Saks. F.R.K. Chung\cite{1}  developed the fundamental concepts on pebbling and published the results in 1989. Hulbert published a survey article on pebbling that increased the interest of researchers to further explore graph pebbling\cite{2}.

Let us denote $G'$s vertex and edge sets as $V (G) $and $E (G) $, respectively. 	Consider a configuration of pebbles on $V(G)$ of a connected graph $G$. A pebbling move is to remove  2 pebbles from a vertex and to place 1 pebble at the neighbouring vertex of the vertex from which the pebbles are removed. The pebble number of $v$ in $G$ is the minimum positive number $\pi(G, v)$ that allows us to shift a pebble to $v$ using a sequence of pebble transformations, regardless of where these pebbles are located on the vertices of G. The pebbling number, $\pi(G)$, of a graph $G$ is the maximum $\pi(G, v)$ over all the vertices of a graph.

For a positive integer $t$,	with every configuration of $\pi_t(G)$(least positive integer) pebbles, if we can transfer $t$ pebbles to  a target through a number of pebbling moves then $\pi_t(G)$ is called the $t-$pebbling number of $G$. Note that $\pi_1(G)= \pi(G)$.

For the configuration  on $G$, let $q$ denote the count of vertices with at least one pebble.  A graph $G$  has  the 2t-pebbling property if $2t$ pebbles can be transfered to a destination  with a minimum of $2\pi_t(G)- q+1$ pebbles.

Lourdusamy et al\cite{3}, \cite{4} studied  that  $n$-cube graphs, the complete graphs, the even cycle graphs,  the complete $r$-partite graphs, the star graphs, the wheel graphs, and the fan graphs have the $2t$-pebbling property.  We discuss the $t-$pebbling number, the $2t-$pebbling property  and Herscovici conjecture  considering  total graphs and some families of chemical graphs.

\section{Preliminaries}
For the basic definitions in graph theory, the reader can read \cite{9}.

\begin{definition}\cite{10}
	The total graph is obtained through a graph operation relating the line graph. A total graph, $T(G)$, of a graph $G$ is a graph with following properties. \\
	(i) whose vertices can be put in one-to-one correspondence with the elements of the set $V(G)\cup E(G)$.\\
	(ii)  Two vertices of $T(G)$ are adjacent if the corresponding elements in $G$ are two adjacent vertices, two adjacent edges or an incident vertex and edge.
	
\end{definition}
\begin{definition}\cite{10}
	Let $V(G\times H) = V(G)\times V(H)$. The edge set of $G\times H$ is defined as: $E(G\times H)= \{((x,y), (x^{'}, y^{'})): x=x^{'}\ and \ (y,y^{'})\in E(H)\ or \ (x,x^{'})\in E(G)\ and \ y=y^{'}\} \ \ and \ (y,y^{'})\in E(H)\ or \ (x,x^{'})\in E(G)\ and \ y=y^{'}\}$. The graph $G\times H$ is called the Cartesian product of $G$ and $H$.	Let $p_{k}$ denote the count of pebbles on ${x_k}\times H,\ q_{k}$  denote the count of occupied vertices in $\{x_k\}\times H$.
\end{definition}
\begin{note}
	For the transmitting subgraph  the reader can refer to \cite{8}.
\end{note}

\begin{notation}
Let $p\left(v\right)$  denote the count of pebbles on $v$ and $p^{\sim}(v)$  the count of pebbles on $v$  which are not on the defined path $P$. Let $S\subset V$. Let $ p^{\sim}(S)$ be the the count  of pebbles  on the vertices not in $S$.  Further, we denote $P_{k}$ as  $k^{th}$ path and $P_{k}^{\sim}$ be the vertices which are not on $k^{th}$ path $P$, where $k$ is the positive integer.  $(x_{k}) \underrightarrow{t} (x_{l})$ refers to  taking off at least $2t$ pebbles from $(x_{k})$ and transferring at least $t$ pebbles on $(x_{l})$. We use $\beta$ to denote the destination vertex.
\end{notation}

The aim of this paper is to verify  Herscovici's conjecture,  when   $G$ is  $T(P_n)$\ and  $H$ satisfies the $2t-$pebbling property,  $\pi_{st}\left(G\times H\right)\le \pi_s\left(G\right)\pi_t\left(H\right)$.  We prove the Herscovisi conjecture  for the product of  the total graph of the path. Herscovici's conjecture is evolved from Graham's Conjecture and Lourdusamy's Conjecture. The first  conjecture is  $\pi\left(G\times H\right)\le \pi	\left(G\right)f\left(H\right)$. The second conjecture is  $\pi_t\left(G\times H\right)\le f\left(G\right)\pi_t\left(H\right)$. From these two conjectures we get Herscovici conjecture:  $\pi_{st}\left(G\times H\right)\le \pi_s\left(G\right)\pi_t\left(H\right)$.
\begin{center}
\section{Main Results}
\end{center}

\section{Pebbling Number of the Total Graph of Path}
\begin{theorem}\label{total}
For $(T(P_n))$ , $\pi(T(P_n))=2^{n-1}+(n-2)$.
\end{theorem}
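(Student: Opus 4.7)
I would first fix notation for $T(P_n)$: label the original path vertices $v_1,\ldots,v_n$ and the edge-vertices $e_1,\ldots,e_{n-1}$ where $e_i$ corresponds to the edge $v_iv_{i+1}$. The adjacencies are $v_i\sim v_{i+1}$, $e_i\sim e_{i+1}$, and $v_i\sim e_{i-1},e_i$, so the graph is a ``thickened path'' made of overlapping triangles $\{v_i,v_{i+1},e_i\}$ glued along the strand $e_1\sim e_2\sim\cdots\sim e_{n-1}$. A direct check gives diameter $n-1$, attained by $(v_1,v_n)$, $(v_1,e_{n-1})$, $(e_1,v_n)$, $(e_1,e_{n-1})$; this sets the target for the lower bound.

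For the lower bound $\pi(T(P_n))\geq 2^{n-1}+n-2$, I would exhibit the configuration $C^\ast$ consisting of $2^{n-1}-1$ pebbles on $v_1$ together with one stranded pebble on each of $e_1,e_2,\ldots,e_{n-2}$, giving exactly $2^{n-1}+n-3$ pebbles, and argue no move sequence delivers a pebble to $v_n$. The intuition: each stranded pebble on $e_i$ is unusable unless first ``boosted'' to two pebbles, and every such boost costs two pebbles drawn from the $v_1$-pile, netting at best the same progress as routing those two pebbles directly along $v_1\to v_2\to\cdots\to v_n$; meanwhile the pile on $v_1$ alone is one pebble short of reaching the distance-$(n-1)$ target. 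I would formalize this with a weight-and-parity argument (weights $w(v_i)=w(e_i)=2^{i-n}$, augmented by tracking the parity of pebbles arriving at each $e_i$), or, as a fallback, by a short direct induction on $n$ showing the $e_i$-pebbles cannot be combined profitably with the $v_1$-pile.

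For the upper bound $\pi(T(P_n))\leq 2^{n-1}+n-2$, I plan induction on $n$, verifying $n=2$ ($T(P_2)=K_3$, $\pi=2$) and $n=3$ directly. For the inductive step, given any target $\beta$ and any configuration $C$ with $|C|=2^{n-1}+n-2$, I would use the reversal automorphism of $T(P_n)$ to reduce to $\beta\in\{v_n,e_{n-1}\}$ or $\beta$ in the ``first half'' of the graph. In the endpoint case $\beta=v_n$, restrict $C$ to $C'=C|_{T(P_{n-1})}$ and note $|C'|\geq (2^{n-1}+n-2)-p(v_n)-p(e_{n-1})$; if $p(v_n)\geq 1$ we are done, if $p(e_{n-1})\geq 2$ one move finishes, and otherwise $|C'|\geq 2^{n-1}+n-4 = 2\cdot\pi(T(P_{n-1}))+(n-4)-(n-4)$, which by the inductive hypothesis applied twice (once to $v_{n-1}$, once after removing the consumed pebbles) yields two pebbles at $v_{n-1}$, completing the move to $v_n$. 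The non-endpoint targets $\beta=v_j$ or $\beta=e_j$ with $1<j<n$ are handled by splitting $T(P_n)$ into two sub-total-graphs meeting at $\beta$ and applying the inductive bound to whichever side contains the majority of pebbles.

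The main obstacle I expect is the bookkeeping in the upper-bound inductive step: to squeeze out the inductive hypothesis when $|C'|$ is exactly $2^{n-2}+n-3$ one must guarantee a \emph{second} pebble is delivered to $v_{n-1}$ (or to $e_{n-1}$) without a fresh budget, which forces a careful use of the additional slack contributed by the $n-2$ term and by pebbles sitting on $e_{n-2}$ or $v_{n-1}$ from the start. The secondary obstacle is making the lower-bound argument for $C^\ast$ genuinely rigorous rather than heuristic; depending on how clean the weight-parity invariant turns out, it may be simpler to do an explicit induction ruling out each way the $e_i$-pebbles could be activated.
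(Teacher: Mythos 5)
There is a genuine gap, and it is in your lower bound: the configuration $C^\ast$ ($2^{n-1}-1$ pebbles on $v_1$ plus one pebble on each of $e_1,\dots,e_{n-2}$, target $v_n$) is in fact \emph{solvable} for every $n\ge 4$, so it cannot certify $\pi(T(P_n))\ge 2^{n-1}+n-2$. The stranded pebbles sit on the connected chain $e_1\sim e_2\sim\cdots\sim e_{n-2}$, and a single two-pebble investment at the near end cascades down the whole chain: the move $v_1\to e_1$ (cost $2$) gives $e_1$ two pebbles, then $e_1\to e_2$ gives $e_2$ two pebbles, and so on until $e_{n-2}$ holds two pebbles and sends one to its neighbour $v_{n-1}$. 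This lands a pebble at distance $1$ from the target for a total cost of $2$, not $2^{n-2}$. The remaining $2^{n-1}-3$ pebbles on $v_1$ then deliver a second pebble to $v_{n-1}$ (distance $n-2$, cost $2^{n-2}\le 2^{n-1}-3$ once $n\ge 4$), and $v_{n-1}\to v_n$ finishes. Concretely for $n=4$: from $7$ pebbles on $v_1$ and one each on $e_1,e_2$, the moves $v_1\to e_1$, $e_1\to e_2$, $e_2\to v_3$, then $v_1\to v_2$ twice, $v_2\to v_3$, $v_3\to v_4$ reach the target using only $9$ pebbles. Your guiding heuristic --- that each boost ``nets at best the same progress as routing those two pebbles directly'' --- is exactly what the chain structure violates: one boost activates \emph{all} the stranded pebbles, so no weight-or-parity invariant can rescue $C^\ast$; a different extremal distribution is required.

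On the upper bound, your induction on $n$ diverges from the paper, which argues directly by cases on the target using transmitting subgraphs rather than by induction; but your version has the hole you flag yourself, and it is real: after discarding $p(v_n)$ and $p(e_{n-1})$ you retain at least $2^{n-1}+n-4$ pebbles, which is strictly less than $2\pi(T(P_{n-1}))=2^{n-1}+2n-6$ for $n\ge 3$, so ``applying the inductive hypothesis twice'' cannot by itself place two pebbles on $v_{n-1}$; you would need a $2$-pebbling property for $T(P_{n-1})$, which in this paper is only established \emph{after} (and using) the present theorem, creating a circularity. I also note, for calibration, that the paper's own lower-bound configuration (single pebbles on $y_{23},\dots,y_{(n-2)(n-1)}$ with the pile on the opposite endpoint) differs from yours only by omitting the pebble on $e_1$; it contains $2^{n-1}+n-4$ pebbles, one fewer than the stated bound requires, and is susceptible to the same cascade for $n\ge 5$, so the lower bound genuinely needs a new certificate rather than a repair of either configuration.
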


\begin{proof}
Let the vertices of $T(P_n)$ be $\{x_1,\ x_2,\ x_3,\cdots, \ x_n, y_{12},y_{23},\cdots , y_{(n-2)(n-1)},\ y_{(n-1)(n)}\}$ and the edges be $\{x_ix_{i+1},\ y_{j(j+1)}y_{(j+1)(j+2)}\ x_iy_{(i)(i+1)},\ x_{i+1}y_{i(i+1)}\}$ where $1\leq i \leq n-1$\ and $1\leq j\leq n-2$. Let $A= \{x_1,\ x_2,\ x_3,\cdots, \ x_n\}$ and $B= \{y_{12},y_{23},\cdots , y_{(n-2)(n-1)},\ y_{(n-1)(n)}\}$. Let $\beta= x_1$.  Let us place one pebble each on the vertices in $<V(B)-\{y_{12}, y_{(n-1)(n)}\}>$ and  $2^{n-1}-1$ pebbles on $x_n$, it is not possible to transfer a pebble to $\beta$. Hence, $\pi(T(P_n))\geq 2^{n-1}+(n-2)$.\\

Now we prove $\pi(T(P_n))\leq 2^{n-1}+(n-2)$. Let $D$ be a configuration of $2^{n-1}+(n-2)$ pebbles on  $T(P_n)$.\\

\noindent \textbf{Case 1:} Let $\beta= x_k \ or \ y_{s(s+1)}$, where $k={1,n} \ and \ j={1,n-1}$. \\
Let $\beta = x_n$. Stacking $2^{n-1}$ pebbles on $x_1$ we are done. If we place one pebble each on the vertices in $B$ other than $y_{n(n-1)}$ and 4 pebbles on $x_1$ we can shift a pebble to $\beta$. If we place one pebble each on the vertices in  $\{<V(A)-\{x_1, x_n\}>\}$  and 2 pebbles on $x_1$ or  if any vertex of the set $B$ contains 2 pebbles then using the transmitting subgraph we can transfer a pebble to $\beta$. If we place 2 pebbles each on the vertices of the set $B$ other than $y_{n(n-1)}$ then we can transfer one pebble each to the vertices of the set $A$ from $x_{2}$ to $x_{n(n-1)}$. By using 2 pebbles on $x_1$ we can transfer a pebble to $\beta$. If we put 2 pebbles each on the vertices in $A$ other than $x_1,  x_{n-1},\ and \ x_n$, we can transfer one pebble each to the vertices of the set $A$ from $x_3$ to $x_{n-1}$ or to the vertices of the set $B$ from $y_{23}\ to \ y_{(n-1)(n-2)}$. Thus, by placing 4 pebbles on the vertex $x_1$  or $y_{12}$ we can reach $\beta$. For the first case, the total number of pebbles used is $2(n-2)+2$ and for the second case the total number of pebbles used is $8+2(n-3)$. Let $p(y_{n(n-1)})=1\ and \ p(x_{n-1})=1$. Then placing $2^{n-2}$ pebbles on $x_1$ or $y_{12}$ we can transfer a pebble to $\beta$. By symmetry, we can prove for $\beta=x_1,\ y_{n(n-1)}\ and \ y_{12}.$ Let $p(y_{n{n-1}})=0\ and \ p(x_{n-1})=1$. Then placing $2^{n-2}$ pebbles on $x_1$ we can shift one more additional pebble to $x_{n-1}$ and reach a pebble to $\beta$. Let $p(y_{n{n-1}})=1\ and \ p(x_{n-1})=0$. Then placing $2^{n-2}$ pebbles on $y_{12}$ we can transfer a pebble to $\beta$. Let $p(y_{n{n-1}})=0\ and \ p(x_{n-1})=0$. If $p(V(A))=2^{n-2}$ and $p(V(B))=2^{n-2}$, we can transfer 2 pebbles to the adjacent vertices of $\beta$. Thus, we can transfer a pebble to $\beta$. By symmetry, we can prove for  $\beta=x_1,\ y_{n(n-1)}\ and \ y_{12}.$

\noindent \textbf{Case 2:} Let $\beta=x_a \ or \ y_{b(b+1)}$, where $2\geq a \geq n-1 \ and \ 2\geq b \geq n-2 $. \\
Without loss of generality, let $\beta=x_a$. Placing $2^{n-a}$ pebbles on $x_n$  or $2^{a-1}$ pebbles on $x_1$ we are done, since the distance from $x_n$ to any other vertex is at most $n-a$ and from $x_1$ to any other vertices is at most $a-1$ when $x_a$ is the target. Thus, the total number of pebbles used is $2^{n-a}+2^{a-1}+(n-2) < 2^{n-1}+(n-2)$. By symmetry, we can prove for $\beta=y_{b(b+1)}$.  Hence, $\pi(T(P_n))= 2^{n-1}+(n-2)$

\end{proof}
\section{$t-$ pebbling number of total graph of path}
\begin{theorem}
For the graph $T(P_n)$, $\pi_t(T(P_n))=t(2^{n-1})+(n-2)$.
\end{theorem}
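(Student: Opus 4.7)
\medskip

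The proof will establish both inequalities. For the lower bound $\pi_t(T(P_n)) \geq t \cdot 2^{n-1} + (n-2)$, I would directly generalize the counterexample from Theorem \ref{total}. Place $t \cdot 2^{n-1} - 1$ pebbles on $x_n$ and one pebble each on the interior vertices of $B$ (so $n-3$ singleton pebbles, together with presumably one more isolated pebble to match the counting in Theorem \ref{total}), and set $\beta = x_1$. Since the shortest path from $x_n$ to $x_1$ in $T(P_n)$ has length $n-1$ (the $x$-path is the shortest available), delivering $t$ pebbles to $x_1$ from a stack on $x_n$ alone requires $t \cdot 2^{n-1}$ pebbles at $x_n$. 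The singleton pebbles on $B$ cannot be combined into any useful move since any pebbling move requires two pebbles at a common vertex, so they are effectively inert. This yields the claimed lower bound.

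For the upper bound $\pi_t(T(P_n)) \leq t \cdot 2^{n-1} + (n-2)$, my plan is to run the induction on $t$ with base case $t=1$ supplied by Theorem \ref{total}. For the inductive step, I would follow the same two-case split (target at an endpoint of the path/line-graph, versus target at an internal vertex) used in Theorem \ref{total}, rescaling every ``stacking'' threshold $2^{n-1}$, $2^{n-2}$, $4$, $2$ by a factor of $t$. The central observation is that each sub-configuration in Theorem \ref{total} that transported a single pebble to $\beta$ will transport exactly $t$ pebbles when its stack sizes are multiplied by $t$, because the number of pebbles arriving at the target under a pebbling cascade along a fixed path scales linearly with the initial stack. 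In particular, a stack of $t \cdot 2^{n-k}$ pebbles at distance $k$ from $\beta$ delivers $t$ pebbles, and the same transmitting subgraphs used in Theorem \ref{total} carry over.

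The main obstacle will be Case 1 of Theorem \ref{total} when the pebbles are scattered rather than concentrated: there the author exhausts many sub-cases (two pebbles on every vertex of $B$, two pebbles on inner vertices of $A$, mixed configurations in which $p(y_{n(n-1)})$ and $p(x_{n-1})$ each equal $0$ or $1$, and so on). For the $t$-pebbling version, each such subcase must be extended by arguing that if a distribution of $\pi_1(T(P_n))$ pebbles delivers one pebble to $\beta$, then the extra $(t-1) \cdot 2^{n-1}$ pebbles (wherever they sit in the configuration) can be used either as additional stack weight at a distant vertex or to repeat the delivery cascade, yielding the required $t$ pebbles at $\beta$. The cleanest way to organize this is: split the given configuration into a ``base'' of $\pi_1(T(P_n)) = 2^{n-1} + (n-2)$ pebbles that delivers one pebble to $\beta$ by Theorem \ref{total}, and then show that the leftover $(t-1) \cdot 2^{n-1}$ pebbles on the resulting configuration still meet the inductive hypothesis $\pi_{t-1}(T(P_n)) = (t-1) \cdot 2^{n-1} + (n-2)$ plus the $(n-2)$ surplus singletons already on $B$. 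Verifying that the singletons on $B$ are never ``used up'' by the first delivery (so they remain available for the inductive step) is where the bookkeeping needs to be done carefully. By symmetry the endpoint cases reduce to $\beta = x_1$, and Case 2 (internal target) is easier since the distances are at most $\max(a-1, n-a) < n-1$, giving a genuine surplus.
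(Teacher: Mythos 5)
Your proposal follows essentially the same route as the paper: the lower bound uses the same extremal configuration (a large stack on $x_n$ plus singleton pebbles on the interior vertices of $B$), and the upper bound is exactly the induction on $t$ that you describe as the ``cleanest way'' --- peel off one delivery to $\beta$ at a cost of at most $2^{n-1}$ pebbles and apply the inductive hypothesis to the remaining $(t-1)2^{n-1}+(n-2)$ pebbles, with a short separate case when $\beta$ already holds some pebbles. The rescaled re-run of the full case analysis of Theorem \ref{total} that you sketch first is not needed and is not what the paper does.
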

\begin{proof}

Placing $t2^n-1+(n-2)$ pebbles on $x_n$ and  one pebble each on the vertices in $V(B)-\{y_{12}, y_{(n-1)(n)}\}$ we cannot transfer  $t$ pebbles to the destination $x_1$. Hence, $\pi_t(T(P_n))\geq t2^{n-1}+(n-2)$.\\
We prove it by induction on $t$.
Now consider a configuration of $t2^{n-1}+(n-2)$ pebbles on $V((T(P_n)))$. By Theorem \ref{total} ensures the validity of the result for $t=1$. Assume it  for $2\leq t^{'}< t$.\\

\noindent\textbf{ Case 1:} Let $p(\beta)=0$\\
Clearly, $V(<(T(P_n))-{\beta}>)\ \underrightarrow{1} \beta$ by using at most $2^{n-1}$ pebbles. Let $p(V<(T(P_n))-{\beta}>)= t2^{n-1}+(n-2)- 2^{n-1}= (t-1)2^{n-1}+(n-2)$.  By induction, we can transfer $t_1$ additional pebbles to $\beta$ .\\

\noindent\textbf{ Case 2:} 	Let $p(\beta)=x$ where $1\leq x \leq t-1$.\\
Let $p(V<(T(P_n))-{\beta}>) = t2^{n-1}+(n-2)- x$. Since $t2^{n-1}+(n-2)- x \geq (t-x) 2^{n-1}+(n-2)- 2^{n-1}$, we can transfer $t-x$ additional pebbles to $\beta$.
\end{proof}

\section{The 2- Pebbling Property of the total graph of Paths }
\begin{theorem}
The graph $(T(P_n))$ satisfies the two-pebbling property.
\end{theorem}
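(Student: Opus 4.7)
The plan is to show that any configuration $D$ with $|D| = 2\pi(T(P_n)) - q + 1 = 2^n + 2n - 3 - q$ pebbles on $V(T(P_n))$, where $q$ is the number of occupied vertices, admits a pebbling strategy delivering two pebbles to any target $\beta$. I follow the case structure of Theorem~\ref{total} (extremal versus interior targets) and enhance each sub-case from one delivery to two by exploiting the ``$-q+1$'' slack.

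An immediate preliminary reduction restricts attention to $p(\beta) = 0$. If $p(\beta) \geq 2$ the claim is trivial, and if $p(\beta) = 1$ then $T(P_n) \setminus \{\beta\}$ carries $2^n + 2n - 4 - q$ pebbles on $q - 1$ occupied vertices. Since $q \leq |V(T(P_n))| = 2n - 1 \leq 2^{n-1} + (n-2) = \pi(T(P_n))$ for $n \geq 3$, the residual size is at least $\pi(T(P_n))$ and Theorem~\ref{total} supplies the second pebble.

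For $\beta$ extremal, say $\beta = x_n$, I revisit the sub-cases of Case~1 of Theorem~\ref{total} and, in each, construct two independent pebbling sequences to $\beta$ rather than one. When the mass is concentrated, so that some vertex $u$ carries $p(u) \geq 2 \cdot 2^{d(u,\beta)}$ pebbles, $u$ alone delivers two pebbles. When the mass is spread, I use two disjoint routes along the doubled-path-plus-diagonal structure of $T(P_n)$ (for instance one through the $x_i$-chain and another through the $y_{i(i+1)}$-chain, linked at the ends by the incident-vertex-edge diagonals), letting the $q-1$ extra pebbles beyond the ``$2\pi$'' budget cover any shortfall between the two routes. For $\beta$ interior (say $\beta = x_a$ with $2 \leq a \leq n-1$, or $\beta = y_{b(b+1)}$), the eccentricity of $\beta$ drops to at most $\max\{a-1, n-a\} \leq n-2$, each delivery costs no more than $2^{n-2}$, and the budget $2^n + 2n - 3 - q$ comfortably supports two independent deliveries by an argument that parallels Case~2 of Theorem~\ref{total}. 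Symmetry reduces the remaining extremal targets $x_1$, $y_{12}$, and $y_{(n-1)n}$ to the case $\beta = x_n$.

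The main obstacle is the extremal case when pebbles are spread thinly over vertices near the far end of $T(P_n)$, because the budget $2\pi(T(P_n)) - q + 1$ is tight and the two routes must share pebbles efficiently. I would resolve this by a weighted-accounting argument: each occupied vertex $u$ contributes a weight $p(u) \cdot 2^{-d(u,\beta)}$ toward $\beta$, and the hypothesis $|D| \geq 2\pi(T(P_n)) - q + 1$, combined with the fact that every occupied vertex contributes at least $2^{-d(u,\beta)}$ regardless of $p(u)$, forces the total weight to be at least $2$, which is then realised by concrete movements along the transmitting subgraph. Small cases ($n \leq 3$) are disposed of by direct inspection.
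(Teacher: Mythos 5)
Your preliminary reduction to $p(\beta)=0$ is sound (for $p(\beta)=1$ the residual $2^{n}+2n-4-q\geq 2^{n}-3\geq \pi(T(P_n))$ for $n\geq 3$, so Theorem~\ref{total} finishes), and your case split by the eccentricity of $\beta$ matches the paper's. But the heart of the matter --- the extremal target with pebbles spread thinly --- is exactly where your argument breaks, in two ways. First, you have the sign of the correction term backwards: the hypothesis gives $2\pi(T(P_n))-q+1=2\pi(T(P_n))-(q-1)$ pebbles, i.e.\ the configuration is $q-1$ pebbles \emph{short} of the na\"{\i}ve ``two independent deliveries'' budget, so there are no ``$q-1$ extra pebbles'' available to cover a shortfall between your two routes; the whole difficulty of the $2$-pebbling property is to recover that deficit from the structure of spread-out configurations, not to spend it.

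Second, the weighted-accounting argument you propose to close the hard case is not a valid sufficiency criterion. The weight $W=\sum_u p(u)2^{-d(u,\beta)}$ being at least $2$ is \emph{necessary} for delivering two pebbles but far from sufficient: a configuration with one pebble on each of several neighbours of $\beta$ has large weight and admits no pebbling move at all, because every singly-occupied vertex strands its pebble. Worse, under the stated hypothesis the bound $W\geq 2$ need not even hold: with all pebbles at distance $n-1$ from $\beta$ one only gets $W\geq (2^{n}+2n-3-q)2^{-(n-1)}=2+(2n-3-q)2^{-(n-1)}$, which drops below $2$ once $q>2n-3$. The paper avoids both problems by arguing constructively: it first assembles two pebbles on a neighbour of $\beta$ (splitting on whether $x_2$ or $y_{12}$ is already occupied) at a controlled cost of about $2^{n-2}$, verifies that the remaining pebble count still meets $\pi(T(P_n))$ so that Theorem~\ref{total} supplies the second pebble, and handles interior targets by induction on $n$ via the induced copies of $T(P_{n-1})$ and $T(P_{n-2})$ --- an inductive structure your proposal omits entirely. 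To repair your write-up you would need to replace the weight heuristic with an explicit accounting of how the $q$ occupied vertices compensate for the $q-1$ deficit (the standard ``$(p-q)/2$ pebbles reach a neighbour'' bookkeeping), which is essentially what the paper's displayed inequalities do.
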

\begin{proof}
Let $A= \{x_1,\ x_2,\ x_3,\cdots, \ x_n\}$ and $B= \{y_{12},y_{23},\cdots , y_{(n-2)(n-1)},\ y_{(n-1)(n)}\}$. 
We use induction on $n$. Clearly, the result is true for $n=2$, since the graph is isomorphic to $K_3$ and it satisfies the two-pebbling property. Assume the result for $3\leq n^{'}< n$.

Let $S$ be a distribution of $2\pi(T(P_n))-q+1=2(2^{n-1}+(n-2))-q+1$ pebbles on  $V((T(P_n)))$.\\

\noindent\textbf{ Case 1:} Let $\beta=x_1$.\\
Let $p(\beta)=0, \ p(x_2)\leq 1\ and \ p(y_{12})\leq 1$. If all the vertices receive one pebble each then $q=2n-2$. Then by placing 2 pebbles on any one of the vertices of $T(P_n)$ we can transfer a pebble to $\beta$. The total number of pebbles used to transfer a pebble to $\beta$ is $2n-1$. Further, to move a pebble we have  $2(2^{n-1}+(n-2))-q+1- (2n-1)=2(2^{n-1}+(n-2))-2n+2+1-2n+1=2(2^{n-1}+(n-2))-n+1)$ pebbles. If we place $2^{n-1}$ pebbles on any one of the vertices of $T(P_n)$, we are done. Thus, we used $2^{n-1}+2n-1$ pebbles.\\
Let $q< 2n-2$. Then $x_2$ or $y_{12}$ is occupied. Let $p(x_2)=1$ and $p(y_{12})=0$. We can transfer a pebble to $x_2$ with a maximum cost of $2^{n-2}$ pebbles and hence we are done.  Now the total number of pebbles available to move an additional pebble to $\beta$ is at least $2(2^{n-1}+(n-2))-q+1- 2^{n-2}-1=2^{n}+2n-4))-q+1-2^{n-2}-1=2^n-2^{n-2}+2n-2-q= 2^{n-2}(4-1)+2n-2-q\geq 2^{n-1}+(n-2)= \pi(T(P_n)$. By Theorem\ref{total}, we can transfer another pebble to $\beta$  using $2^{n-1}$ pebbles and could retain at least $n-1$ pebbles. Similarly, we can prove when $p(x_2)=0$ and $p(y_{12})=1$. \\

Assume $p(x_2)=0$ and $p(y_{12})=0$. If there is vertex in $N(x_2)$ with at least 2 pebbles  we can shift a pebble to $x_2$ and  an additional pebble to $x_2$ at a maximum cost of $2^{n-2}$ pebbles, since $d(x_2, u) \leq n-2$ for any $u\in V(T(P_n))$. Thus we can put 2 pebbles to $x_2$ and transfer a pebble to $\beta$. Now the total number of pebbles available to transfer a pebble to $\beta$ is at least $2(2^{n-1}+(n-2))-q+1- 2^{n-2}-1=2^{n}+2n-4))-q+1-2^{n-2}-1=2^n-2^{n-2}+2n-2-q= 2^{n-2}(4-1)+2n-2-q\geq 2^{n-1}+(n-2)= \pi(T(P_n)$. Since $q\leq 2n-3$ by Theorem\ref{total}, we can shift a pebble to $\beta$ at the maximum cost of $2^{n-1}$ pebbles and we could retain at least $(n-2)$ pebbles.

Let $p(N(y_{12}))\leq 1$.  Consider $p(N(y_{12}))= 0$. In this case, we distribute  $2(2^{n-1}+(n-2))-q+1$ pebbles on  $<V(T(P_n))-\{x_1,x_2,y_{12},y_{23}\}>$. Let $p(y_{23})=0$. Using $2(2^{n-2}+(n-2))-q+1$ pebbles we can shift 2 pebbles to $x_2$ and we could retain at least $(n-1)$ pebbles. Therefore, we are done. Now the total count of pebbles available to transfer another pebble to $\beta$ is at least $2(2^{n-1}+(n-2))-q+1- (2(2^{n-2}+(n-2))-q+1+(n-1)=(2^{n-1}+(n-2)+1=\pi(T(P_n))+1$. By Theorem \ref{total}, it can be done using at most $2^{n-1}$ pebbles and we could retain at least $(n-1)$ pebbles.

Consider $p(N(y_{23}))= 1$. Since $p(y_{12})= 0$  using  $2(2^{n-1}+(n-2))-(q-1)+1$ pebbles  we can shift 2 pebbles to $x_2$ and keep  at least $(n-1)$ pebbles in the graph. Therefore, one pebble can be shifted to $\beta$. Now the total count of  pebbles available to transfer an additional pebble to $\beta$ is at least $2(2^{n-1}+(n-2))-q+1- (2(2^{n-2}+(n-2))-(q-1)+(n-1)=(2^{n-1}+(n-2)=\pi(T(P_n))$. By Theorem  \ref{total}, we can shift an additional pebble to $\beta$ using at most $2^{n-1}$ pebbles. And we could retain at least $(n-2)$ pebbles. Similarly, we can prove for $\beta= x_n, y_{n-1}, y_{12}$. \\

\noindent\textbf{ Case 2:} Let $\beta=x_k\ or\ y_{(n-1)(n-2)} $, where $k={2,n-1}$.\\
let $\beta=x_k$  and $p(x_2)=0,\ p(x_1)\leq 1, p(x_3)\leq 1, p(y_{12})\leq 1 \ and \ p(y_{23})\leq 1$. Now the number of pebbles distributed on $<T(P_n)-\{x_1,y_{12}\}>$ is at least $2(2^{n-1}+(n-2))-q+1-2\geq 2(2^{n-2}+(n-2))-q_1+1$ where $q_1$ is the count of the occupied vertices in $<T(P_n)-\{x_1,y_{12}\}>$. Clearly $q>q_1$. Then we can transfer 2 pebbles to $x_2$ using induction. By symmetry, we are done if $\beta= x_{n-1}, \ and\ y_{(n-1)(n-2)}$.\\

\noindent\textbf{ Case 3:} Let $\beta= x_s\ or\ y_{k(k+1)}$, where $s=\{3,4,\cdots,n-2\}\ and \ k=\{3,\cdots n-3\}$.\\
Let $\beta= x_s$. Suppose $p(x_1)+p(y_{12})\geq 2^{n-2}+(n-2)$ pebbles, we note that $<V(T(P_n))-\{x_{n-1}, x_n, y_{n(n-1)}, \\ y_{(n-1)(n-2)}\}>$ is isomorphic  to $T(P_{n-2})$. Since the count of  pebbles on  $<V(T(P_n))-\{x_{n-1}, x_n, y{n(n-1)}, y){(n-1)(n-2)}\}>$ is at least $2^{n-2}+(n-2)= 2(2{n-3})+(n-2)$ we can shift 2 pebbles to $\beta$ by Theorem \ref{total}.

Suppose $p(x_1)+p(y_{12})< 2^{n-2}+(n-3)$  then the subgraph induced by 
$<T(P_n)-\{x_1,y_{12}\}>$ has at least $2(2^{n-1}+(n-2))-q+1-2^{n-2}+(n-3)\geq 2(2^{n-2}+(n-1))-q_1+1$ pebbles where $q_1$ is the number of occupied vertices in $<T(P_n)-\{x_1,y_{12}\}>$. Since $q>q_1$, we can move 2 pebbles to$\beta$ by induction. \\
\end{proof} 
\section{The 2t- Pebbling Property of Total graph of Path graphs }
\begin{theorem}
The graph $T(P_n)$ has the $2t$-pebbling property.
\end{theorem}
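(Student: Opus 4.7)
The plan is to follow the structure of Theorem 5.1 and proceed by induction on $n$. The base case $n = 2$ gives $T(P_2) \cong K_3$, which is known to satisfy the $2t$-pebbling property for every $t \geq 1$. Assuming the result for all $T(P_{n'})$ with $n' < n$, we consider a distribution $D$ of $|D| = 2\pi_t(T(P_n)) - q + 1 = t \cdot 2^n + 2(n-2) - q + 1$ pebbles on $V(T(P_n))$, where $q$ denotes the number of occupied vertices, and aim to deliver $2t$ pebbles to the target $\beta$.

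The case split will mirror Theorem 5.1, based on the location of $\beta$: an endpoint ($\beta \in \{x_1, x_n, y_{12}, y_{(n-1)n}\}$), a near-endpoint ($\beta \in \{x_2, x_{n-1}, y_{23}, y_{(n-2)(n-1)}\}$), or an interior vertex. In each case, the two-step strategy is to first invoke Theorem 4.1 to land $t$ pebbles on $\beta$ at a cost of at most $\pi_t(T(P_n)) = t \cdot 2^{n-1} + (n - 2)$ pebbles, so that the residual configuration on $V(T(P_n)) \setminus \{\beta\}$ still carries at least $\pi_t(T(P_n)) - q + 1$ pebbles, and then to show that this residual, together with the updated occupied-vertex count $q'$, suffices either for a direct second application of Theorem 4.1 or for the inductive hypothesis on a subgraph isomorphic to $T(P_{n-1})$ or $T(P_{n-2})$. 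Specifically, in the interior case, if $p(x_1) + p(y_{12}) \geq t \cdot 2^{n-2} + (n-2)$ then $<V(T(P_n)) - \{x_{n-1}, x_n, y_{(n-1)n}, y_{(n-2)(n-1)}\}> \cong T(P_{n-2})$ holds enough pebbles for Theorem 4.1 applied to $T(P_{n-2})$ to deliver $2t$ pebbles; otherwise we drop $\{x_1, y_{12}\}$ and apply the inductive hypothesis on $T(P_{n-1})$, using that the occupied count $q_1$ in the truncated graph satisfies $q_1 < q$.

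The main obstacle will be the careful bookkeeping of occupied vertices across the two successive $t$-transfers. After the first transfer, $\beta$ becomes occupied while several intermediate vertices on the pebbling paths are emptied, so the updated count $q'$ is neither $q$ nor $q - 1$ in general, but depends on the specific paths used. The delicate point is to route the first $t$-transfer so that the residual $\pi_t(T(P_n)) - q + 1$ pebbles still exceed $\pi_t(T(P_n)) - q' + 1$, which is achieved by always drawing pebbles from the ``far'' side of $\beta$ first and by exploiting the slack coming from the exponential term $t \cdot 2^{n-1}$. This is exactly the type of accounting used in Cases 1 and 2 of Theorem 5.1, and the same argument, scaled linearly in $t$, should carry through.
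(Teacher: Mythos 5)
Your proposal takes a genuinely different route from the paper, and the route you chose is the hard one. The paper does \emph{not} induct on $n$ here and does not replay the location-of-$\beta$ case analysis. It inducts on $t$, using the already-proved $2$-pebbling property as the base case $t=1$, and the whole induction step is a few lines: if $\beta$ already carries $x\ge 1$ pebbles, the remaining $2\pi_t(T(P_n))-q+1-x$ pebbles are at least $\pi_{2t-x}(T(P_n))$, so the $t$-pebbling theorem alone delivers the missing $2t-x$ pebbles; if $\beta$ is empty, send $2$ pebbles to $\beta$ at a cost of at most $2^n$ pebbles, observe that at least $2\pi_{t-1}(T(P_n))-q+1$ pebbles remain, and invoke the induction hypothesis for $t-1$. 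All of the delicate endpoint/near-endpoint/interior analysis is thereby quarantined in the $t=1$ case, which is already done.

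As written, your plan has a genuine gap at exactly the point you flag as "the main obstacle." After the first $t$-transfer the residual configuration on $V(T(P_n))\setminus\{\beta\}$ has only $\pi_t(T(P_n))-q+1$ pebbles, which is strictly less than $\pi_t(T(P_n))$ whenever $q\ge 2$, so "a direct second application of Theorem [the $t$-pebbling number]" is simply not available in general; and the assertion that the occupied-vertex bookkeeping from the $2$-pebbling proof "scaled linearly in $t$ should carry through" is not a proof --- it is precisely the content that must be established, and it is nontrivial because the first $t$-transfer can empty source vertices and change $q$ in either direction. Your interior-vertex dichotomy (threshold $t\cdot 2^{n-2}+(n-2)$ on $p(x_1)+p(y_{12})$, giving $\pi_{2t}(T(P_{n-2}))$ pebbles on the truncated graph, versus deleting $\{x_1,y_{12}\}$ and appealing to the inductive hypothesis on $T(P_{n-1})$) does check out arithmetically, but the endpoint and near-endpoint cases --- the messiest part of the $t=1$ argument --- are left entirely to the reader. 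The cheap repair is to abandon the induction on $n$ and induct on $t$ instead, as the paper does.
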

\begin{proof}
By Theorem\ref{total}  validates the  result for $t=1$. Assume it for  $2\leq t^{'}< t$. Let $p(V(T(P_n))=2\pi_t(T(P_n))-q+1= 2(t2^{n-1}+(n-2))-q+1$. Let $\beta$ be the destination vertex. \\

\noindent \textbf{Case 1:} Let $p(\beta)=x$ where $1\leq \ x \leq \ 2t-1$.\\
Let $p(<T(P_n)-{\beta}>)$ = $2(t2^{n-1}+(n-2))-q+1-x\geq (2t-x)2^{n-1}+n-2=\pi_{2t-x}(T(P_n))$. Therefore, we can shift $2t-x$ pebbles additionally to $\beta$ by Theorem \ref{total}.\\
\noindent \textbf{Case 2:} Let $p(\beta)=0$.\\
It is easy to transfer 2 pebbles to any target $\beta$ in $T(P_n)$ with the maximum cost $2^{n}$ pebbles. The number of pebbles remaining on  $V(T(P_n))$ is at least $2(t2^{n-1}+(n-2))-q+1-2^{n}=2(t-1)2^{n-1}-q+1$ Thus, we can transfer $2(t-1)$ pebbles to  $\beta$ by induction.

\end{proof}
\section{Herscovici Pebbling Conjecture on the product of total graph of Path graphs}

\begin{theorem}
For $n, m\geq 2,$ we have $\pi_{st}(T(P_n))\times (T(P_m))\leq \pi_s(T(P_n))\pi_t(T(P_m))= (s(2^{n-1}+(n-2))) (t2^(m-1)+(m-2))$.
\end{theorem}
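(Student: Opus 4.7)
The plan is to adapt the classical Pachter--Snevily--Voxman peeling argument for Graham's conjecture on paths, lifted to the $(s,t)$-setting via the $2t$-pebbling property of $T(P_m)$ established in the previous section. I identify $V(T(P_n)\times T(P_m))$ with the disjoint union $\bigsqcup_{w\in V(T(P_n))}H_w$, where $H_w=\{w\}\times T(P_m)$, and I write $p_w$ and $q_w$ for the pebble count and number of occupied vertices in $H_w$. Fix the target $\beta=(\beta_1,\beta_2)$; using the left--right symmetry of $T(P_n)$, I may assume $\beta_1=x_1$, so that the worst-case distances in the $T(P_n)$-factor are realised.

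\textbf{Induction on $s$.} For the base case $s=1$ the statement reduces to $\pi_t(T(P_n)\times T(P_m))\le \pi(T(P_n))\pi_t(T(P_m))$, a Graham-type inequality that I would prove by the same peeling argument run only once, using $\pi(T(P_n))=2^{n-1}+(n-2)$ from Theorem~\ref{total}. For the inductive step, I assume the bound for all $s'<s$ and take a configuration $D$ of size $\pi_s(T(P_n))\pi_t(T(P_m))$. I classify the copies as \emph{heavy} (those with $p_w\ge\pi_t(T(P_m))$, from which $t$ pebbles can be moved to $(w,\beta_2)$), \emph{medium} (those satisfying the $2t$-pebbling inequality $p_w\ge 2\pi_t(T(P_m))-q_w+1$, which deliver $2t$ pebbles to $(w,\beta_2)$, effectively a $t$-bundle that can also be pushed one step in the $T(P_n)$-direction for free), and \emph{light} (the remainder, whose pebbles are not directly useful but whose occupied vertices feed into the $-q_w+1$ slack for their neighbours).

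\textbf{Reduction on the level subgraph.} After the above extractions, the subgraph $T(P_n)\times\{\beta_2\}\cong T(P_n)$ carries a virtual distribution of $t$-bundles: one bundle per heavy or medium copy. Viewing a $t$-bundle on $w$ as a single pebble on $w\in T(P_n)$, each further pebbling move costs $2t$ pebbles to produce $t$ pebbles on an adjacent vertex, so the problem of assembling $st$ pebbles at $(x_1,\beta_2)$ is exactly the problem of assembling $s$ pebbles at $x_1$ in $T(P_n)$ with moves of cost-per-bundle $2t$. The formula $\pi_s(T(P_n))=s\cdot 2^{n-1}+(n-2)$ now dictates the total number of bundles required; multiplying through by $\pi_t(T(P_m))$ gives the budget $\pi_s(T(P_n))\,\pi_t(T(P_m))$, which is precisely the hypothesised size of $D$.

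\textbf{Principal difficulty.} The main obstacle is the bookkeeping at the boundary between heavy, medium, and light copies. Unlike in the plain path $P_n$, the total graph $T(P_n)$ contains edge-vertices $y_{i(i+1)}$ forming triangles $\{x_i,x_{i+1},y_{i(i+1)}\}$, so a $t$-bundle landing on $y_{i(i+1)}$ can be repurposed onto either $x_i$ or $x_{i+1}$ at unit $T(P_n)$-distance; this must be exploited (to tighten the transportation cost) without being double-counted. The second subtlety lies in the additive $(n-2)$ and $(m-2)$ terms: the slack $-q_w+1$ in the $2t$-pebbling inequality provides exactly $q_w-1$ extra pebbles per nonempty copy, and one must verify that summing these across all nonempty copies absorbs the cross-term $(n-2)\cdot t\cdot 2^{m-1}+s\cdot 2^{n-1}\cdot(m-2)+(n-2)(m-2)$ produced by expanding the product $\pi_s(T(P_n))\,\pi_t(T(P_m))$, so that the final inequality closes and not merely its leading $st\,2^{n+m-2}$ asymptotic term.
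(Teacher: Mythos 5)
Your proposal is a strategy outline rather than a proof, and the step it defers is precisely the one that carries all the difficulty. The reduction in your third paragraph --- ``the problem of assembling $st$ pebbles at $(x_1,\beta_2)$ is exactly the problem of assembling $s$ pebbles at $x_1$ in $T(P_n)$'' --- does not follow from the classification you set up. The total budget is $\pi_s(T(P_n))\,\pi_t(T(P_m))$ pebbles, but you only obtain one $t$-bundle per \emph{heavy or medium} copy; light copies contribute nothing directly, so the number of bundles available on the level subgraph can fall well short of $\pi_s(T(P_n))$, and nothing in your argument shows otherwise. The intended compensation is the $-q_w+1$ slack from the $2t$-pebbling property, but you state this only as something ``one must verify,'' and likewise the absorption of the cross-terms $(n-2)t2^{m-1}+s2^{n-1}(m-2)+(n-2)(m-2)$ is flagged as an open verification. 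That verification is the theorem. The paper closes this gap by a potential-function computation in the style of Herscovici--Higgins: for each copy it tracks the quantities $p_i$ and $q_i$ and establishes, by case analysis on the position of the target and on which copies are ``rich'' (e.g.\ whether $\tfrac{p_i+q_i}{2}$ exceeds $2^{d}\pi_t(H)$ for the relevant distance $d$), one of several explicit inequalities of the form $p_1 + \sum_i \tfrac{p_i - q_i}{2} \ge \pi_t(H)$, from which $t$ pebbles reach the target; only then does it peel off $\pi_t(H)$ pebbles and induct on $s$. Your outline never produces such an inequality, so the argument does not close.

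Two further points. First, your accounting of medium copies is off: a copy satisfying $p_w\ge 2\pi_t-q_w+1$ delivers $2t$ pebbles to $(w,\beta_2)$, which is two $t$-bundles (one of which can be pushed one step in the $T(P_n)$ direction), not ``a $t$-bundle that can also be pushed one step for free''; as written you undercount what medium copies cost and overcount what a single bundle can do. Second, your base case $s=1$ (the Graham-type inequality $\pi_t(G\times H)\le\pi(G)\pi_t(H)$) is itself only promised (``I would prove by the same peeling argument''), so the induction on $s$ has no established floor. The skeleton you describe could in principle be completed along the lines of the paper's weight-function argument, but as submitted it is missing the central quantitative lemma.
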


\begin{proof} Let $G=(T(P_n))$ and $H= T(P_m))$. 
Let $V(G) = \{x_{1}, x_{2}, x_{3},\cdots ,x_{n},\\ y_{1}, y_{2},y_{3} \cdots y_{n-1}\}$ and 
let $V(H)=\{v_{1}, v_{2}, v_{3},\cdots ,v_{m}, u_{1}, u_{2},u_{3}\\
\cdots u_{m-1} \}$.  Let $S_1=\{x_{1}, x_{2}, x_{3},\cdots ,x_{n}\}; \ S_2=\{y_{1}, y_{2},y_{3}, \cdots, y_{n-1}\};\ T_1=\{v_{1}, v_{2}, v_{3},\cdots ,v_{m}\}; T_2=\{u_{1}, u_{2},u_{3}, \cdots ,u_{m-1} \}$.  
Let  $p_{i}^{1}$ stand for the total count  of pebbles on $\{V(S_1)\times V(H)\},\ q_{i}^{1}$ for the total count  of occupied vertices of $\{V(S_1)\times V(H)\}$ where $1\leq i\leq n$. Let  $p_{j}^{2}$ stand for the total count of  pebbles on $\{V(S_2)\times V(H)\},\ q_{j}^{2}$ for the total count  of occupied vertices of $\{V(S_2)\times V(H)\}$ where $1\leq j\leq n-1 $. When $n=2\ and \ m=2$ the product of the total graph of the path is isomorphic to  $(K_{3})\times (K_{3})$. Therefore we consider the product of the total graph of path, when $n,m > 2$.  The proof is by fixing s first and applying induction on $t$. For $s,t=1$
we need to prove the  Graham Conjecture $\pi((T(p_{n}))\times (T(P_{m}))\leq \pi(T(P_{n}))\pi(T(P_{m}))= ((2^{n-1}+(n-2))) (2^(m-1)+(m-2))$. 

Let $D$ be a distribution of $((2^{n-1}+(n-2))) (2^(m-1)+(m-2))$ pebbles on  $(T(P_n))\times (T(P_m))$.

\textbf{Case 1:} Let $\beta= (x_1, v)$, where $v\in H$. .\\
If $p_1^{1}\geq \pi(H)$, then we can transfer $1$ pebble to $(x_1, v_1)$. So let $p_1^{1}< \pi(H)$. If $\frac{p_{2}^{1}+q_{2}^{1}}{2} > \pi(H)$ we can transfer $2$ pebbles to $(x_2, v_1)$. And $(x_2, v_1)\underrightarrow{1}\  (x_1, v_1)$.  If $\frac{p_{1}^{2}+q_{1}^{2}}{2} > \pi(H)$, we can transfer $2$ pebbles to $(y_1, v_1).$  And then $(y_1, v_1) \underrightarrow{1}\  (x_1, v_1)$. If  $\frac{p_{2}^{2}+q_{2}^{2}}{2} > 2\pi(H)$ we can transfer $4$ pebbles to $(y_2, v).$ And then $(y_2, v_1) \underrightarrow{2}\  (y_1, v_1) \underrightarrow{1}\  (x_1, v_1)$. If $i\in \{1,2,\cdots \ n\}$ such that $\frac{p_{i}^{1}+q_{i}^{1}}{2} > 2^{n-2}\pi(H)$, then we can transfer $2$ pebble to $(x_2, v_1)$  and $1$ pebble to $(x_1,v_1)$. If there is $j\in \{1, 2,\cdots n-1\}$, such that    $\frac{p_{j}^{2}+q_{j}^{2}}{2} > 2^{n-2}\pi(H)$, then we can transfer $2$ pebbles to $(y_1, v_1)$  and $1$ pebble to $(x_1,v_1)$.  If  $\frac{p_{i}^{1}+q_{i}^{1}}{2}+\frac{p_{j}^{2}+q_{j}^{2}}{2} > 2^{n-2}\pi(H)$, then we can transfer $2$ pebbles to $(x_2, v_1)$ and $1$ pebble to $(x_1,v_1)$. \\
Let  $\frac{p_{i}^{1}+q_{i}^{1}}{2} < 2^{n-2}\pi(H)$ \ or \ $\frac{p_{j}^{2}+q_{j}^{2}}{2} < 2^{n-2}\pi(H)$. Since we can transfer $2$ pebbles to  $(x_2, v_1)$ or $(y_1,v_1)$ which is the neighbourhood vertex of $(x_1,v_1)$, we have one of the following equations:
\begin{equation}
	p_1^{1} + \sum_{i=2}^{n} \frac{p_{i}^{1}-q_{i}^{1}}{2} \geq \pi(H)
\end{equation}

\begin{equation}
	p_1^{1} + \sum_{j=1}^{n-1} \frac{p_{j}^{2}-q_{j}^{2}}{2} \geq \pi(H)
\end{equation}

\begin{equation}
	p_1^{1} + \sum_{i=2}^{n} \frac{p_{i}^{1}-q_{i}^{1}}{2} +\sum_{j=1}^{n-1} \frac{p_{i}^{2}-q_{i}^{2}}{2}\geq \pi(H)
\end{equation}

Thus, we can transfer $1$ pebble to the destination vertex.   Similarly, we can prove for the vertices of  $\{\{y_1\times H\},\{ y_{n-1}\times H\}, \{\{x_n\}\times H\}$.

\textbf{Case 2:} Let $\beta= (x_a, v)$, where $v\in H$ and $2\leq a \leq n-1$.\\
If $p_a^{1}\geq \pi(H)$, then we can transfer $1$ pebble to $(x_a, v)$. So let $p_a^{1}< \pi(H)$. If   $\frac{p_{a-1}^{1}+q_{a-1}^{1}}{2} > \pi(H)$ then we can transfer $2$	
pebbles to $(x_{a-1}, v)$. And then $(x_{a-1}, v) \underrightarrow{1}\  (x_a,v)$. If $\frac{p_{a}^{2}+q_{a}^{2}}{2} > \pi(H)$ then we can transfer $2$	 
pebbles to $(y_{a}, v)$. And then $(y_{a}, v)\underrightarrow{1}\  (x_a,v)$. If  $\frac{p_{a-1}^{2}+q_{a-1}^{2}}{2} > 2\pi(H)$  then we can transfer $4$ pebbles to  $(y_{(a-1)}, v)$. And then $(y_{(a-1)}, v)\ \underrightarrow{2}\ (x_(a-1),v)\ \underrightarrow{1}\ (x_a, v)$. If   $i\in \{1,2,3\cdots \ n\}, \ i\ne a \ and \ i > a $  such that   $\frac{p_{i}^{1}+q_{i}^{1}}{2} > 2^{n-2-a}\pi(H)$, then we can transfer $2$ pebbles to $(x(a+1),v)$  and $1$ pebble to $(x_a,v)$. If  $j\in \{1,2,3\cdots \ n-1\}, \ j\ne a\ and \ j > a $  such that  $\frac{p_{j}^{2}+q_{j}^{2}}{2} > 2^{n-2-a}\pi(H)$, then we can transfer $2$ pebbles to $(y_a, v)$  and $1$ pebble to $(x_a,v)$. If   $\frac{p_{i}^{1}+q_{i}^{1}}{2}+\frac{p_{j}^{2}+q_{j}^{2}}{2} > 2^{n-2-a}\pi(H)$, then we can transfer $2$ pebbles to either $(x_{a+1},v)$\ or \ $(y_{a}, v)$ and $1$ pebble to $(x_a,v)$.  If  $i\in \{1,2,3\cdots \ n\}, \ i\ne a \ and \ i < a $  such that   $\frac{p_{i}^{1}+q_{i}^{1}}{2} > 2^{a-2}\pi(H)$, then we can transfer $2$ pebbles to $(x_{a-1},v)$  and $1$ pebble to $(x_a,v)$. If  $j\in \{1,2,3\cdots \ n-1\}, \ j\ne a\ and \ j < a $  such that  $\frac{p_{j}^{2}+q_{j}^{2}}{2} > 2^{a-2}\pi(H)$, then we can transfer $2$ pebbles to $(y_a, v)$  and $t$ pebbles to $(x_a,v)$. If $i,j<a$, such that  $\frac{p_{i}^{1}+q_{i}^{1}}{2}+\frac{p_{j}^{2}+q_{j}^{2}}{2} > 2^{a-2}\pi(H)$, then we can transfer $2$ pebbles to either $(x_{a-1},v)$\ or \ $(y_{a}, v)$ and $1$ pebble to $(x_a,v)$\\
Let  $\frac{p_{i}^{1}+q_{i}^{1}}{2} < 2^{n-2-a}\pi(H) \ for\ i>a$ \ or \ $\frac{p_{j}^{2}+q_{j}^{2}}{2} < 2^{n-2-a}\pi(H)\ for\ j>a$ \ or \
$\frac{p_{i}^{1}+q_{i}^{1}}{2} < 2^{a-2}\pi(H) \ for\ i<a$ \ or \ $\frac{p_{j}^{2}+q_{j}^{2}}{2} < 2^{a-2}\pi(H)\ for\ j>a$

Since we can transfer $2$ pebbles to any vertex of the neighbourhood  of $(v_a,x)$, we have one of the following equations:
\begin{equation}
	p_a^{1} + \sum_{i=1,i\ne a}^{n} \frac{p_{i}^{1}-q_{i}^{1}}{2} \geq \pi(H)
\end{equation}

\begin{equation}
	p_a^{2} + \sum_{j=1,\ j\ne a}^{n-1} \frac{p_{j}^{2}-q_{j}^{2}}{2} + \geq \pi(H)
\end{equation}

\begin{equation}
	p_a^{1} + \sum_{i=1, i\ne a, \ i> a \ or \ i< a}^{n} \frac{p_{i}^{1}-q_{i}^{1}}{2} +\sum_{j=1, j\ne a, \ j< a,\ or \ j>a}^{n-1} \frac{p_{j}^{2}-q_{j}^{2}}{2}\geq \pi(H)
\end{equation}

Thus, we can transfer $1$ pebble to the destination.   Similarly, we can prove for the vertices in  $\{\{V(S_1)\}\times H- \{(x_a,v), (x_n, v)\}\},\{\{V(S_2)\}\times H- \{(y_1,v), (y_{n-1}, v)\}\}$.\\

When $s=1\ and \ t\geq 2$ it follows from Lourdusamy Conjecture. We assume that the theorem is true for $2\leq s^{'}, t^{'}< s,t.$  First let us try to shift $t$ pebbles to the desired vertex by induction on $t$ using the Lourdusamy pebbling conjecture  and then shift $(s-1)t$ pebbles to the desired vertex using induction on $s$.\\

Let $D$ be a distribution of $(s(2^{n-1}+(n-2))) (t2^(m-1)+(m-2))$ pebbles on the vertices of $(T(P_n))\times (T(P_m))$.

\textbf{Case 3:} Let $\beta=(x_1, v)$, where $v\in H$.\\
If $p_1^{1}\geq \pi_t(H)$, then we can transfer $t$ pebbles to $(x_1, v)$. So let $p_1^{1}< \pi_t(H)$. If $\frac{p_{2}^{1}+q_{2}^{1}}{2} > \pi_t(H)$ we can move $2t$ pebbles to $(x_2, v)$. And then $(x_2, v)\underrightarrow{t}\  (x_1, v)$.  If  $\frac{p_{1}^{2}+q_{1}^{2}}{2} > \pi_t(H)$ we can move $2t$ pebbles to $(y_1, v)$. And then $(y_1, v)\underrightarrow{t}\  (x_1, v)$. If   $\frac{p_{2}^{2}+q_{2}^{2}}{2} > 2\pi_t(H)$ we can move $4t$ pebbles to $(y_2, v)$. And then $(y_2, v) \underrightarrow{2t}\  (y_1, v) \underrightarrow{t}\  (x_1, v)$. If $i\in \{1,2,\cdots \ n\}$ such that $\frac{p_{i}^{1}+q_{i}^{1}}{2} > 2^{n-2}\pi_t(H)$, then we can transfer $2t$ pebbles to $(x_2, v)$  and $t$ pebbles to $(x_1,v)$. If $j\in \{1, 2,\cdots n-1\}$, such that    $\frac{p_{j}^{2}+q_{j}^{2}}{2} > 2^{n-2}\pi_t(H)$, then we can transfer $2t$ pebbles to $(y_1, v)$  and $t$ pebbles to $(x_1,v)$.  If  $\frac{p_{i}^{1}+q_{i}^{1}}{2}+\frac{p_{j}^{2}+q_{j}^{2}}{2} > 2^{n-2}\pi_t(H)$, then we can transfer $2t$ pebbles to $(x_2, v)$ and $t$ pebble to $(x_1,v)$. \\
Let  $\frac{p_{i}^{1}+q_{i}^{1}}{2} < 2^{n-2}\pi_t(H)$ \ or \ $\frac{p_{j}^{2}+q_{j}^{2}}{2} < 2^{n-2}\pi_t(H)$
Since we can transfer $2t$ pebbles to  $(x_2, v)$ or $(y_1,v)$ which is the  neighbourhood vertex of $(x_1,v)$, we have one of the following equations:
\begin{equation}
	p_1^{1} + \sum_{i=2}^{n} \frac{p_{i}^{1}-q_{i}^{1}}{2} \geq \pi_t(H)
\end{equation}

\begin{equation}
	p_1^{1} + \sum_{j=1}^{n-1} \frac{p_{j}^{2}-q_{j}^{2}}{2} \geq \pi_t(H)
\end{equation}

\begin{equation}
	p_1^{1} + \sum_{i=2}^{n} \frac{p_{i}^{1}-q_{i}^{1}}{2} +\sum_{j=1}^{n-1} \frac{p_{i}^{2}-q_{i}^{2}}{2}\geq \pi_t(H)
\end{equation}

Thus, we can transfer $t$ pebble to $\beta$.  The count  of pebbles remaining on the graph is $(s2^{n-1)}(n-2))\pi_t(H)- \pi_t(H) \geq ((s-1)2^{n-1}+(n-2))\pi_t(H)=\pi_{s-1}(G)\pi_t(H).$ Thus, we can transfer $(s-1)t$ pebbles additionally  to $\beta$. Thus, $\beta$ gets $st$ pebbles and we are done. Similarly, we can prove for $\{\{y_1, y_{n-1}\}\times H\}, \{\{x_n\}\times H\}$. 



\textbf{Case 4:} Let $\beta=(x_a, v)$, where $v\in H$ and $2\leq a \leq n-1$.\\
If $p_a^{1}\geq \pi_t(H)$, then we can transfer $t$ pebbles to $(x_a, v)$. So let $p_a^{1}< \pi_t(H)$. If  $\frac{p_{a-1}^{1}+q_{a-1}^{1}}{2} > \pi_t(H)$ then we can transfer $2t$	
pebbles to $(x_{a-1}, v)$. And then $(x_{a-1}, v)\underrightarrow{t}\  (x_a,v)$. If   $\frac{p_{a}^{2}+q_{a}^{2}}{2} > \pi_t(H)$ then we can transfer $2t$	 
pebbles to $(y_{a}, v)$. And then $(y_{a}, v)\underrightarrow{t}\  (x_a,v)$. If  $\frac{p_{a-1}^{2}+q_{a-1}^{2}}{2} > 2\pi_t(H)$  then we can transfer $4t$ pebbles to  $(y_{(a-1)}, v)$. And then $(y_{(a-1)}, v)\ \underrightarrow{2t}\ (x_{a-1},v)\ \underrightarrow{t}\ (x_a, v)$. If   $i\in \{1,2,3\cdots \ n\}, \ i\ne a \ and \ i > a $  such that   $\frac{p_{i}^{1}+q_{i}^{1}}{2} > 2^{n-2-a}\pi_t(H)$, then we can transfer $2t$ pebbles to $(x(a+1),v)$  and $t$ pebbles to $(x_a,v)$. If   $j\in \{1,2,3\cdots \ n-1\}, \ j\ne a\ and \ j > a $  such that  $\frac{p_{j}^{2}+q_{j}^{2}}{2} > 2^{n-2-a}\pi_t(H)$, then we can transfer $2t$ pebbles to $(y_a, v)$  and $t$ pebbles to $(x_a,v)$. If  $\frac{p_{i}^{1}+q_{i}^{1}}{2}+\frac{p_{j}^{2}+q_{j}^{2}}{2} > 2^{n-2-a}\pi_t(H)$, then we can transfer $2t$ pebbles to either $(x_{a+1},v)$\ or \ $(y_{a}, v)$ and $t$ pebbles to $(x_a,v)$.  If  $i\in \{1,2,3\cdots \ n\}, \ i\ne a \ and \ i < a $  such that   $\frac{p_{i}^{1}+q_{i}^{1}}{2} > 2^{a-2}\pi_t(H)$, then we can transfer $2t$ pebbles to $(x(a-1),v)$  and $t$ pebbles to $(x_a,v)$. If  $j\in \{1,2,3\cdots \ n-1\}, \ j\ne a\ and \ j < a $  such that  $\frac{p_{j}^{2}+q_{j}^{2}}{2} > 2^{a-2}\pi_t(H)$, then we can transfer $2t$ pebbles to $(y_a, v)$  and $t$ pebbles to $(x_a,v)$. If there are  $i,j<a$, such that  $\frac{p_{i}^{1}+q_{i}^{1}}{2}+\frac{p_{j}^{2}+q_{j}^{2}}{2} > 2^{a-2}\pi_t(H)$, then we can transfer $2t$ pebbles to either $(x_{a-1},v)$\ or \ $(y_{a}, v)$ and $t$ pebbles to $(x_a,v)$\\
Let  $\frac{p_{i}^{1}+q_{i}^{1}}{2} < 2^{n-2-a}\pi_t(H) \ \forall i>a$ \ or \ $\frac{p_{j}^{2}+q_{j}^{2}}{2} < 2^{n-2-a}\pi_t(H)\ \forall j>a$ \ or \
$\frac{p_{i}^{1}+q_{i}^{1}}{2} < 2^{a-2}\pi_t(H) \ for i<a$ \ or \ $\frac{p_{j}^{2}+q_{j}^{2}}{2} < 2^{a-2}\pi_t(H)\ for j>a$

Since we can transfer $2t$ pebbles to any vertex of the neighbourhood  of $(v_a,x)$, we have one of the following equations:
\begin{equation}
	p_a^{1} + \sum_{i=1,i\ne a}^{n} \frac{p_{i}^{1}-q_{i}^{1}}{2} \geq \pi_t(H)
\end{equation}

\begin{equation}
	p_a^{2} + \sum_{j=1,\ j\ne a}^{n-1} \frac{p_{j}^{2}-q_{j}^{2}}{2} + \geq \pi_t(H)
\end{equation}

\begin{equation}
	p_a^{1} + \sum_{i=1, i\ne a, \ i> a \ or \ i< a}^{n} \frac{p_{i}^{1}-q_{i}^{1}}{2} +\sum_{j=1, j\ne a, \ j< a,\ or \ j>a}^{n-1} \frac{p_{j}^{2}-q_{j}^{2}}{2}\geq \pi_t(H)
\end{equation}

Thus, we can transfer $t$ pebble to $\beta$.  The number pebbles remaining on the graph is $(s2^{n-1}+(n-2))\pi_t(H)- \pi_t(H) \geq ((s-1)2^{n-1}+(n-2))\pi_t(H)=\pi_{s-1}(G)\pi_t(H).$ Thus we can move $(s-1)t$ pebbles additionaly  to $\beta$. Thus, $\beta$ gets $st$ pebbles and we are done. Similarly, we can prove for the vertices in $\{\{V(S_1)\}\times H- \{(x_a,v), (x_n, v)\}\},\{\{V(S_2)\}\times H- \{(y_1,v), (y_{n-1}, v)\}\}$.\\

\end{proof}

\section{Conclusion}
In the present article, we compute pebbling number and $t$-pebbling number and $2t-$ pebbling property of the total graph of the path. Then we prove the Herscovici's Conjecture $\pi_{st}(T(P_n))\times (T(P_m))\leq \pi_s(T(P_n))\pi_t(T(P_m))= (s(2^{n-1}+(n-2))) (t2^(m-1)+(m-2))$.

\end{document}